\documentclass[journal,twoside,web]{ieeecolor}

\usepackage{generic}
\usepackage{cite}
\usepackage{booktabs}
\usepackage{caption}
\usepackage{amsmath,amssymb,amsfonts}
\usepackage{graphicx}
\usepackage{textcomp}
\usepackage{mathtools}
\usepackage{xcolor}
 
\usepackage{amsthm}
\usepackage{fancyhdr}
\usepackage{lcsys}




\makeatletter

\newtheorem{theorem}{Theorem}[section]

\newtheorem{proposition}[theorem]{Proposition}

\newtheorem{definition}[theorem]{Definition}
\newtheorem{remark}[theorem]{Remark}
\newtheorem{assum}[theorem]{Assumption}

\pagestyle{empty} 

\fancyhf{}

\fancyfoot[c]{}
\fancypagestyle{FirstPage}{
	\lfoot{		\begin{minipage}{\textwidth}
		\footnotesize
			\copyright 2022 IEEE. Personal use of this material is permitted.  Permission from IEEE must be obtained for all other uses, in any current or future media, including reprinting/republishing this material for advertising or promotional purposes, creating new collective works, for resale or redistribution to servers or lists, or reuse of any copyrighted component of this work in other works.
	\end{minipage}} 
}
\begin{document}


\title{Recursively Feasible Data-Driven Distributionally Robust Model Predictive Control with Additive Disturbances}

\author{Christoph Mark and Steven Liu 
	\thanks{Institute  of  Control  Systems,  Department  of  Electrical  and  Computer
		Engineering, University of Kaiserslautern, Erwin-Schrödinger-Str. 12, 67663
		Kaiserslautern, Germany, {\tt\small mark|sliu@eit.uni-kl.de}}%
}

\maketitle

\thispagestyle{empty}

\begin{abstract}
	In this paper we propose a data-driven distributionally robust Model Predictive Control framework for constrained stochastic systems with unbounded additive disturbances. Recursive feasibility is ensured by optimizing over a linearly interpolated initial state constraint in combination with a simplified affine disturbance feedback policy. We consider a moment-based ambiguity set with data-driven radius for the second moment of the disturbance, where we derive a minimum number of samples in order to ensure user-given confidence bounds on the chance constraints and closed-loop performance. The paper closes with a numerical example, highlighting the performance gain and chance constraint satisfaction based on different sample sizes.
\end{abstract}

\section{Introduction}
\thispagestyle{FirstPage}
Model predictive control (MPC) is an optimization-based control method capable of dealing with general constraints and performance criteria \cite{kouvaritakis2016model}. In many MPC applications, decisions must be made under uncertainty, leading to robust \cite{mayne2005robust} or stochastic MPC \cite{farina2016stochastic} approaches. While robust MPC requires an a-priori known bound on the disturbance to satisfy hard constraints, stochastic MPC relaxes this requirement by using knowledge of the underlying moment information of the stochastic disturbance to soften the constraints as chance constraints \cite{hewing2018stochastic}. In practice, exact moment information is rarely available and must be estimated from data, which leads to distributionally robust optimization (DRO) methods \cite{rahimian2019distributionally}. 

The main idea of DRO is to define a so-called ambiguity set that contains plausible variations of the empirically estimated distribution (or moment information). In the DRO literature, a rough distinction is made between moment-based and discrepancy-based sets \cite{rahimian2019distributionally}, both of which are considered in several works on distributionally robust MPC (DR-MPC), e.g., the Wasserstein discrepancy measure is used in \cite{mark2020stochastic, mark2021data, lu2020soft, coulson2021distributionally}, the total variation discrepancy measure in \cite{dixit2022distributionally}, whereas the authors of \cite{van2015distributionally, coppens2021data} rely on moment-based ambiguity sets.

\subsubsection*{Contribution} In this paper we study a data-driven approach to DR-MPC for linear systems with additive unbounded disturbances under moment-based ambiguity sets. We use a simplified disturbance feedback (SADF) control parameterization \cite{zhang2020stochastic} with the intention to reduce the number of decision variables of the MPC optimization problem.
This further allows us to replace the expected value cost function and the chance constraints with their distributionally robust surrogates, where the constraints are cast as second order cone (SOC) constraints \cite{Calafiore}. To ensure recursive feasibility we employ a similar technique as suggested by \cite{kohler2022recursively}, where we optimize the initial state constrained on a line between a guaranteed feasible solution (the shifted optimal solution from the previous time step) and the state feedback initialization. In Proposition \ref{prop:covariance} we derive a minimum number of disturbance samples, such that the ambiguity set contains the true moments with high probability. In comparison to \cite{coppens2020data} we thus require fewer samples by assuming a known first moment. This enables us to derive rigorous theoretical properties, such as an expected cost decrease, an asymptotic average performance bound and satisfaction of conditional chance constraint.

Coppens and Patrinos \cite{coppens2021data} recently proposed a comparable DR-MPC framework under the assumption of bounded disturbances, where the main difference lies in the handling of the distributionally robust chance constraints and in ensuring recursive feasibility. Van Parys et. al. \cite{van2015distributionally} proposed a DR-MPC framework under a moment-based ambiguity set, which is however not data-driven. The distributionally robust chance constraints are approximated using a conditional Value-at-Risk measure. Similar to \cite{coppens2021data, mark2020stochastic} we show in a numerical example the closed-loop performance and constraint satisfaction rate for different samples sizes $N_s$. 

The paper is organized as follows.
In Section \ref{sec:preliminaries} we introduce the problem setting and our result on a data-driven ambiguity radius. Section \ref{sec:MPC} discusses the SADF control parameterization, the treatment of the distributionally robust chance constraints and cost function, the required terminal ingredients and initial conditions for the DR-MPC problem. Section \ref{sec:theoretical} is devoted to the theoretical analysis of the DR-MPC problem and Section \ref{sec:example} to a numerical example. The paper closes with some concluding remarks.
\section{Preliminaries}
\label{sec:preliminaries}
\subsection{Notation}
A probability space is defined by the triplet $(\Omega, \mathcal{F}, \mathbb{P})$, where $\Omega$ is the sample space, $\mathcal{F}$ the $\sigma$-algebra on $\Omega$ and $\mathbb{P}$ the probability measure on $(\Omega, \mathcal{F})$.  Given an event $E_1$ we define the probability occurrence as $\mathbb{P}(E_1)$ and the conditional probability given $E_2$ as $\mathbb{P}(E_1 | E_2)$. For a random variable $w$, we define the expected value as $\mathbb{E}\{w\}$, whereas the conditional expectation of $w$ conditional to a random variable $x$ is denoted as $\mathbb{E}\{w | x\}$. The weighted 2-norm w.r.t. a positive definite matrix $Q = Q^\top$ is $\Vert x \Vert_Q^2 = x^\top Q x$. Positive definite and semidefinite matrices are indicated as $A>0$ and $A\geq0$, respectively. The ceiling function is denoted as $\lceil \cdot \rceil$. The pseudo inverse of a matrix $A$ is denoted as $A^\dagger$.
\subsection{Problem setting}
\label{sec:problem}
In this paper, we consider a discrete linear time-invariant system
\begin{align}
	x(k+1) = A x(k) + B u(k) + Ew(k) \label{eq:dynamics}
\end{align}
with the state $x \in \mathbb{R}^{n_x}$, input $u \in \mathbb{R}^{n_u}$, disturbance $w \in \mathbb{R}^{n_w}$ and matrices $A$, $B$ and $E$ are of conformal dimensions. The stochastic disturbance $w(k)$ is assumed to be i.i.d. for all $k \in \mathbb{N}$ that follows an unknown distribution (push-forward measure) $\mu^*$. We assume that we have access to $i = 1,\ldots, N_s$ random samples of $w \sim \mu^*$, which we denote as $\hat{w}^i$. We further assume that the matrix pair $(A,B)$ is stabilizable, the matrix $E$ has full column rank and that perfect state measurement is available at each time instant $k$. To distinguish between closed-loop and open-loop we introduce the prediction dynamics over a time horizon of length $N \in \mathbb{N}$
\begin{align*}
	\bar{x}_k = \bar{A} x_{0|k} + \bar{B} \bar{u}_k + \bar{E} \bar{w}_k,
\end{align*}
where $\bar{x}_k = [x^\top_{0|k}, x^\top_{1|k}, \ldots, x^\top_{N|k}]^\top \in \mathbb{R}^{(N+1) n_x}$ denotes the state sequence, $\bar{u}_k = [u^\top_{0|k}, \ldots, u^\top_{N-1|k}]^\top \in \mathbb{R}^{N n_u}$ the input sequence and $\bar{w}_k = [w^\top_{0|k}, \ldots, w^\top_{N-1|k}]^\top \in \mathbb{R}^{N n_w}$ the disturbance sequence. The matrices $\bar{A} \in \mathbb{R}^{(N+1)n_x \times n_x}$, $\bar{B} \in \mathbb{R}^{(N+1)n_x \times N n_u}$ and $\bar{E} \in \mathbb{R}^{ (N+1) n_x \times Nn_w }$ are given as 
{\scriptsize
	\begin{align*}
		&\bar{A} \coloneq \begin{bmatrix}
			I \\
			A \\
			\vdots \\
			A^N
		\end{bmatrix}, \bar{B} \coloneqq \begin{bmatrix}
			0 & 0 & \dots & 0 \\
			B & 0 & \dots & 0 \\
			AB & B & \dots & 0 \\
			\vdots & \ddots & \ddots & 0 \\
			A^{N-1}B & \dots & AB & B
		\end{bmatrix}, \\
	&\bar{E} \coloneqq \begin{bmatrix}
			0 & 0 & \dots & 0 \\
			E & 0 & \dots & 0 \\
			AE & E & \dots & 0 \\
			\vdots & \ddots & \ddots & 0 \\
			A^{N-1}E & \dots & AE & E
		\end{bmatrix}.
	\end{align*}
}
We impose chance constraints on the states and inputs
\begin{subequations}
	\begin{align}
		\mathbb{P}(h^\top_{t,r} \bar{x}_k &\leq 1 \: | \: x(k)) \geq p^x_{r} \: \: && t = 0, \ldots, N-1 \label{eq:constraints:state_chance}\\
		\mathbb{P}(l_{t,s}^\top \bar{u}_k &\leq 1 \: | \: x(k)) \geq p^u_{s} \: \: && t = 0, \ldots, N-1,
		\label{eq:constraints:input_chance}
	\end{align} \label{eq:constraints}%
\end{subequations}
where $h_{t,r} \in \mathbb{R}^{(N+1)n_x}$ and $l_{t,s} \in \mathbb{R}^{N n_u}$ denote the left-hand-side of the $r = 1,\ldots, n_r$ state and $s = 1, \ldots, n_s$ input half-space constraints and $p_r^x, p_s^u \in (0,1)$ are the required levels of chance constraint satisfaction. Given an initial value $x_{0|k}$ we opt to solve the following finite horizon stochastic optimal control problem
\begin{subequations}
	\label{eq:smpc_optimization}
	\begin{align}
		\!\min_{\bar{u}_k} & \quad \mathbb{E}_{\mu^*} \left\{ \Vert x_{N|k} \Vert_{P}^2 + \sum_{t=0}^{N-1} \Vert x_{t|k} \Vert_{Q}^2 + \Vert u_{t|k} \Vert_{R}^2  \bigg | x(k)\right\}  \label{eq:smpc:cost} \\
		\text{s.t.} & \quad \bar{x}_k = \bar{A} x_{0|k} + \bar{B} \bar{u}_k + \bar{E} \bar{w}_k \nonumber \\
		& \quad \mathbb{P}(h^\top_{t,r} \bar{x}_k \leq 1 \: | \: x(k)) \geq p^x_{r} \quad t = 0, \ldots, N-1 \label{eq:smpc:chance_constraints}\\
		& \quad \mathbb{P}(l_{t,s}^\top \bar{u}_k \leq 1 \: | \: x(k)) \geq p^u_{s} \quad \: t = 0, \ldots, N-1, \label{eq:smpc:input_chance_constraints}
	\end{align}
\end{subequations}
where $Q > 0$, $R > 0$ and $P > 0$ are positive definite symmetric weighting matrices and $P$ additionally satisfies the Lyapunov inequality
\begin{align}
	(A + B K)^\top P (A + B K)+ Q + K^\top R K \leq P \label{eq:lyapunov}
\end{align}
for some linear controller matrix $K \in \mathbb{R}^{n_u \times n_x}$.

Problem \eqref{eq:smpc_optimization} represents an infinite dimensional optimization problem due to the control input $u$ and the additive disturbance $w$. This issue will be tackled in Section \ref{sec:control_param} by using a SADF parameterization. Furthermore, the cost function \eqref{eq:smpc:cost} and chance constraints \eqref{eq:smpc:chance_constraints}-\eqref{eq:smpc:input_chance_constraints} are evaluated w.r.t. the true, but unknown distribution $\mu^*$. 
Thus, we formulate a distributionally robust optimization problem that uses a so-called ambiguity set $\mathcal{P}$, i.e. a set of probability distributions, where each $\mu \in \mathcal{P}$ lies within some distance to the sample covariance $\hat{\Sigma} = N_s^{-1} \sum_{i=1}^{N_s} \hat{w}^i (\hat{w}^i)^\top$ under the assumption that $\mathbb{E}_{\mu}\{w\} = 0$. In particular, the ambiguity set represents the uncertainty of the empirical estimator and is parameterized as in \cite{delage2010distributionally}
\begin{align}
	\label{eq:ambiguity_set}
	\mathcal{P} \coloneqq \left\{  \mu \in \mathcal{M} \  \middle\vert \begin{array}{l}
		\mathbb{E}_{\mu}\{ w \}  = 0 \\
		\mathbb{E}_{\mu}\{ w  w^\top \} \leq \kappa_\beta \hat{\Sigma} \}
	\end{array}\right\},
\end{align}
where $\mathcal{M}$ denotes the set of all probability distributions defined on $(\mathbb{R}^{n_w}, \mathcal{B})$ with  $\mathcal{B}$ the associated Borel $\sigma$-algebra of $\mathbb{R}^{n_w}$. For some confidence level $\beta \in (0,1)$ we define a constant $\kappa_\beta \geq 1$, such that $\mathbb{P}( \mu^* \in \mathcal{P} ) \geq 1-\beta$.
\subsection{Data-driven ambiguity set}
In the following we derive an explicit value for the constant $\kappa_\beta$ under the assumption of sub-Gaussianity of the random variables $w(k)$, which was similarly done by \cite{coppens2020data}. This extends the results from Delage and Ye \cite{delage2010distributionally}, who provide an explicit value $\kappa_\beta$ for bounded random variables.
\begin{definition}
	A random variable $\xi$ is sub-Gaussian with variance proxy $\sigma^2$ if $\mathbb{E}\{ \xi \} = 0$ and its moment generating function satisfies
	\begin{align*}
		\mathbb{E}\{ e^{\lambda \xi} \} \leq e^{\frac{\sigma^2 \lambda^2}{2}} \quad \forall \lambda \in \mathbb{R}.
	\end{align*}
	We denote this by $\xi \sim \text{subG}(\sigma^2)$.
\end{definition}
\begin{proposition}
	\label{prop:covariance}
	Let $w \in \mathbb{R}^{n_w}$ be a zero-mean sub-Gaussian random variable with $\mathbb{E}\{ w  w^\top \} = \Sigma$. Let $\{ \hat{w}^i \}_{i=1}^{N_s}$ be $N_s$ i.i.d. samples obtained from the true distribution of $w$ and define $\hat{\Sigma} = N_s^{-1} \sum_{i=1}^{N_s} \hat{w}_i (\hat{w}^i)^\top$ as the empirical covariance matrix. 
	Let $\epsilon \in (0,0.5)$, $\beta \in (0,1)$, $c_1(\sigma, \epsilon) = \sigma^2 / (1 - 2 \epsilon)$, $c_2(\beta, \epsilon, n_w) = n_w \log(1 + 2/\epsilon) + \log(2/\beta)$, then for all $N_s \in \mathbb{N}$ satisfying 
	\begin{align}
		N_s \geq \left\lceil 2 c_1 c_2 \left( 8 c_1 + 4 \sqrt{4  c_1^2 + c_1} + 1 \right) \right\rceil, \label{eq:thm:condition}
	\end{align}
	the covariance bound $\Sigma \leq \frac{1}{1 - \gamma(N_s, \beta/2)}\hat{\Sigma}$ holds with a probability of at least $1 - \beta$, where 
	\begin{align*}
		\gamma(N_s,\beta) \coloneqq c_1(\sigma, \epsilon) \left( \sqrt{ \frac{32 c_2(\beta, \epsilon, n_w)}{N_s}} + \frac{2 c_2(\beta, \epsilon, n_w)}{N_s} \right).
	\end{align*}
\end{proposition}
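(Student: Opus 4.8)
The plan is to recast the semidefinite inequality $\Sigma \preceq \frac{1}{1-\gamma(N_s,\beta/2)}\hat{\Sigma}$ as an operator-norm concentration statement for a whitened sample covariance, and then to control that by the classical recipe of an $\epsilon$-net on the unit sphere combined with a scalar Bernstein-type bound for squared sub-Gaussian variables. Assume $\Sigma \succ 0$ (otherwise restrict everything to $\mathrm{range}(\Sigma)$, on which $w$ and hence $\hat{\Sigma}$ are supported almost surely) and set $z \coloneqq \Sigma^{-1/2} w$, so that $\mathbb{E}\{z z^\top\} = I$, with samples $z^i \coloneqq \Sigma^{-1/2}\hat{w}^i$ and empirical version $\hat{S} \coloneqq \Sigma^{-1/2}\hat{\Sigma}\,\Sigma^{-1/2} = N_s^{-1}\sum_{i=1}^{N_s} z^i (z^i)^\top$. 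For $\gamma \in (0,1)$ the target inequality is equivalent to $\lambda_{\min}(\hat{S}) \ge 1-\gamma$, which is implied by the spectral-norm bound $\Vert \hat{S} - I \Vert \le \gamma$. The sub-Gaussian hypothesis enters exactly once here: for every unit vector $v$ the scalar $\langle z^i, v\rangle$ is zero-mean and $\mathrm{subG}(\sigma^2)$, the variance proxy being understood relative to the covariance so that whitening preserves it.

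Next I would discretise the unit sphere by a finite $\epsilon$-net $\mathcal{N}$ with $|\mathcal{N}| \le (1+2/\epsilon)^{n_w}$ and invoke the standard estimate $\Vert \hat{S} - I \Vert \le \frac{1}{1-2\epsilon}\max_{v \in \mathcal{N}}|v^\top(\hat{S}-I)v|$; this is where the prefactor $c_1 = \sigma^2/(1-2\epsilon)$ in $\gamma$ originates. It then suffices to bound, uniformly over the finitely many $v \in \mathcal{N}$, the scalar deviations $v^\top(\hat{S}-I)v = N_s^{-1}\sum_{i=1}^{N_s}\langle z^i, v\rangle^2 - \mathbb{E}\{\langle z, v\rangle^2\}$.

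For fixed $v \in \mathcal{N}$, $\langle z^i, v\rangle^2 - \mathbb{E}\{\langle z, v\rangle^2\}$ is a centred sub-exponential variable, and a Bernstein-type inequality delivers a tail of the shape $\mathbb{P}\bigl(|v^\top(\hat{S}-I)v| > \sigma^2(\sqrt{32\,t/N_s} + 2t/N_s)\bigr) \le 2e^{-t}$. Taking $t = c_2(\beta/2,\epsilon,n_w) = n_w\log(1+2/\epsilon) + \log(4/\beta)$ and a union bound over $\mathcal{N}$, the summand $n_w\log(1+2/\epsilon)$ cancels $\log|\mathcal{N}|$ and the remaining $\log(4/\beta)$ (together with the factor $2$ from the two-sided tail) keeps the total failure probability at most $\tfrac{\beta}{2}\le\beta$; hence $\max_{v\in\mathcal{N}}|v^\top(\hat{S}-I)v| \le (1-2\epsilon)\,\gamma(N_s,\beta/2)$, so that $\Vert \hat{S}-I\Vert \le \gamma(N_s,\beta/2)$ and therefore $\Sigma \preceq \frac{1}{1-\gamma(N_s,\beta/2)}\hat{\Sigma}$ on an event of probability at least $1-\beta$.

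It remains to check that the bound is non-vacuous, i.e.\ that the sample-size condition \eqref{eq:thm:condition} is precisely the requirement $\gamma(N_s,\beta/2) < 1$. Substituting $x \coloneqq \sqrt{c_2/N_s}$, the inequality $\gamma<1$ becomes the quadratic $2c_1 x^2 + \sqrt{32}\,c_1 x - 1 < 0$; keeping its positive root, rationalising the resulting bound on $x$, and translating back to $N_s = c_2/x^2$ yields exactly $N_s \ge \lceil 2 c_1 c_2 (8c_1 + 4\sqrt{4c_1^2 + c_1} + 1)\rceil$. I expect the genuine work — and the main obstacle — to be the constant bookkeeping: selecting (or re-deriving) a Bernstein inequality for squared sub-Gaussians whose constants line up with the exact $\sqrt{32}$ and $2$ in the definition of $\gamma$, and making precise how the whitening acts on the variance proxy (immediate for isotropic or covariance-normalised noise, but to be stated carefully in general). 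The $\epsilon$-net reduction and the algebra turning ``$\gamma<1$'' into \eqref{eq:thm:condition} are then routine.
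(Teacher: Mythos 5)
Your proposal is correct and follows essentially the same route as the paper: whiten via $\Sigma^{-1/2}$, reduce the matrix bound to a spectral-norm concentration statement for the whitened sample covariance with threshold $\gamma(N_s,\beta/2)$, and obtain the sample-size condition \eqref{eq:thm:condition} from the quadratic (in $\sqrt{N_s}$) requirement $\gamma(N_s,\beta/2)<1$, whose rationalised positive root indeed gives $2c_1c_2(8c_1+4\sqrt{4c_1^2+c_1}+1)$. The only difference is that the paper imports the concentration step wholesale from \cite[Lem.~A.1]{hsu2012tail} (as used in \cite[Thm.~8]{coppens2020data}), whereas you sketch its standard proof ($\epsilon$-net of cardinality $(1+2/\epsilon)^{n_w}$, Bernstein tail for squared sub-Gaussians, union bound), which is exactly where the constants $c_1$, $c_2$ and the factors $\sqrt{32}$ and $2$ in $\gamma$ originate, so the constant bookkeeping you defer is precisely what the cited lemma supplies.
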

\begin{proof}
The proof follows from \cite[Thm. 8]{coppens2020data}. Define a random variable $\xi = \Sigma^{-1/2}w \sim \text{subG}(\sigma^2)$ such that
	\begin{align*}
		\mathbb{E}\{ \xi \} &= \Sigma^{-1/2} \mathbb{E}\{w\} = 0 \\
		\mathbb{E}\{\xi \xi^\top \} &= \Sigma^{-1/2} \mathbb{E}\{ w w^\top \} \Sigma^{-1/2} = I
	\end{align*}
	and let $\tilde{I} = N_s^{-1} \sum_{i=1}^{N_s} \hat{\xi}^i (\hat{\xi}^i)^\top$ be the empirical covariance matrix of $\xi$. Consider now the empirical covariance matrix $\hat{\Sigma} = N_s^{-1} \sum_{i=1}^{N_s} \hat{w}^i (\hat{w}^i)^\top$ of the actual random variable $w$, which, after substitution of $\hat{w}^i = \Sigma^{1/2} \hat{\xi}^i$ equals
	\begin{align}
		\hat{\Sigma} = \Sigma^{1/2}   \left[ N_s^{-1} \sum_{i=1}^{N_s} \hat{\xi}^i (\hat{\xi}^i)^\top \right] \Sigma^{1/2} = \Sigma^{1/2} \tilde{I} \Sigma^{1/2}. \label{eq:proof:covariance_reform}
	\end{align}
	From \cite[Lem. A.1]{hsu2012tail} we have with probability of at least $1-\beta$ that
	$\Vert \tilde{I} - I \Vert_2 \leq \gamma(N_s,\beta/2)$, which is equivalent to
	\begin{align}
		(1 - \gamma(N_s, \beta/2))I \leq \tilde{I} \leq (1 + \gamma(N_s, \beta/2)) I. \label{eq:proof:cov}
	\end{align}
	Since we are only interested in an upper bound for the covariance matrix $\Sigma$, e.g. as required by \eqref{eq:ambiguity_set}, we find from the left inequality in \eqref{eq:proof:cov} that
	\begin{align*}
		I \leq \frac{1}{1 - \gamma(N_s, \frac{\beta}{2})}\tilde{I} \overset{\eqref{eq:proof:covariance_reform}}{\Longrightarrow} \Sigma \leq \frac{1}{1 - \gamma(N_s, \frac{\beta}{2})}\hat{\Sigma}.
	\end{align*}
	where we used the fact that condition \eqref{eq:thm:condition} implies $\gamma(N_s, \beta/2) < 1$. Finally, condition \eqref{eq:thm:condition} follows from assuming that $1-\gamma(N_s, \beta/2) > 0$, which is a quadratic inequality in the sample size $\sqrt{N_s}$.
\end{proof}
For a fixed $\beta \in (0,1)$, the mapping from $\epsilon \mapsto N_s$ in condition \eqref{eq:thm:condition} is convex on the interval $\epsilon \in (0, 0.5)$. Thus, to obtain the smallest number $N_s$ satisfying \eqref{eq:thm:condition}, we solve a nonlinear (convex) optimization problem 
\begin{align}
	\epsilon^* = \underset{\epsilon \in (0, 0.5)}{\arg\min} \:\:\: 2 c_1 c_2 \left( 8 c_1 + 4 \sqrt{4  c_1^2 + c_1} + 1 \right). \label{eq:nonlinear_opt}
\end{align}
Finally, by setting $\kappa_\beta = 1/(1 - \gamma(N_s, \beta/2))$ we obtain from optimization problem \eqref{eq:nonlinear_opt} and Proposition \ref{prop:covariance} an explicit number of samples $N_s$ to ensure that the ambiguity set \eqref{eq:ambiguity_set} contains the true distribution $\mu^*$ with $1-\beta$ confidence. 
\begin{remark}
	The result of Proposition \ref{prop:covariance} is a special case of \cite[Thm. 8]{coppens2020data} with known first moment information. As a consequence we require fewer samples compared to the mean and variance ambiguity set proposed by \cite{coppens2020data} to achieve the $1-\beta$ confidence of the ambiguity set \eqref{eq:ambiguity_set}.
\end{remark}
\section{Distributionally Robust MPC}
\label{sec:MPC}
\subsection{Control parameterization}
\label{sec:control_param}
We resort to a SADF parameterization \cite{zhang2020stochastic} of the form 
\begin{align}
	u_{i|k} = v_{i|k} + \sum_{t=0}^{i-1} M_{i-t|k} w_{t|k}, \label{eq:disturbance_feedback}
\end{align}
where $v_{i|k} \in \mathbb{R}^{n_u}$ is the predicted control input and $M_{i-t|k} \in \mathbb{R}^{n_u \times n_x}$ are feedback matrices, both of which are decision variables in the resulting MPC optimization problem. Thus, $u_{i|k}$ depends affinely on the past $i$ disturbance $w_{0|k}, \ldots, w_{i-1|k}$. To streamline the presentation we consider the matrix $\bar{M}_k \in \mathbb{R}^{N n_u \times N n_w}$ and the vector $\bar{v}_k \in \mathbb{R}^{N n_u}$
\begin{align*}
	&\bar{M}_k \coloneqq \begin{bmatrix}
		0 & 0 & \dots & 0 \\
		M_{1|k} & 0 & \dots & 0 \\
		\vdots & \ddots & \ddots & 0 \\
		M_{N-1|k} & \dots & M_{1|k} & 0
	\end{bmatrix}, \:
	\bar{v}_k \coloneqq \begin{bmatrix}
		v_{0|k} \\
		v_{1|k} \\
		\vdots \\
		v_{N-1|k}
	\end{bmatrix}
\end{align*}
such that $\bar{u}_k = \bar{v}_k + \bar{M}_k \bar{w}_k$. The vector $\bar{v}_k$ can be interpreted as the predicted nominal control input that corresponds to the predicted nominal state trajectory $\bar{z}_k  =\bar{A} x_{0|k} + \bar{B} \bar{v}_k$. 
\subsection{Distributionally robust chance constraints}
\label{sec:chance_constraints}
In the following we replace the individual chance constraints \eqref{eq:smpc:chance_constraints}-\eqref{eq:smpc:input_chance_constraints} with distributionally robust chance constraints of the form
\begin{subequations}
	\begin{align}
		& \inf_{\mu \in \mathcal{P}} \: \mathbb{P}(h^\top_{t,r} \bar{x}_k \leq  1 \: | \: x(k)) \nonumber \\
		&=\inf_{\mu \in \mathcal{P}} \: \mathbb{P} \bigg( \underbrace{\begin{bmatrix}
				\bar{w}_k^\top & 1
		\end{bmatrix}}_{\coloneqq d^\top}
		\underbrace{\begin{bmatrix}
				h^\top_{t,r} (\bar{B} \bar{M}_k + \bar{E}) \\
				h^\top_{t,r} \bar{z}_k
		\end{bmatrix}}_{\coloneqq \tilde x} \leq 1 \bigg \vert x(k) \bigg) \geq p_r^x
		\label{eq:dr_chance_constraint} \\
		&\inf_{\mu \in \mathcal{P}} \:  \mathbb{P}(l_{t,s}^\top \bar{u}_k \leq 1 \: | \: x(k)) \geq p^u_{s}	\nonumber\\
		&=\inf_{\mu \in \mathcal{P}} \: \mathbb{P} \bigg( \begin{bmatrix}
			\bar{w}_k^\top & 1
		\end{bmatrix}
		\underbrace{\begin{bmatrix}
				l^\top_{t,s} \bar{M}_k \\
				l^\top_{t,s} \bar{v}_k
		\end{bmatrix}}_{\coloneqq \tilde u} \leq 1 \bigg \vert x(k)\bigg) \geq p_s^u, \label{eq:dr_input_constraint}
	\end{align}
\end{subequations}
where we substituted the expressions for $\bar{x}_k$, $\bar{u}_k$ and $\bar{z}_k$. By definition of the ambiguity set \eqref{eq:ambiguity_set} it holds that $\mathbb{E}_{\mu}\{ w\} = 0$ and $\sup_{\mu \in \mathcal{P}} \mathbb{E}_{\mu} \{ w w^\top \} = \kappa_\beta \hat{\Sigma}$. Thus, $\mathbb{E}_\mu \{ d \} =  [0 \quad 1]^\top$ and $\sup_{\mu \in \mathcal{P}} \mathbb{E}_{\mu} \{ (d-\mathbb{E}_\mu \{ d \}) (d-\mathbb{E}_\mu \{ d \})^\top \} = \scriptsize\begin{bmatrix}
	\kappa_\beta \hat{\Sigma}_N  & 0 \\
	0 & 0
\end{bmatrix}$ with $\hat{\Sigma}_N = I_{N} \otimes \hat{\Sigma}$. It remains to apply \cite[Thm 3.1]{Calafiore} to express \eqref{eq:dr_chance_constraint}-\eqref{eq:dr_input_constraint} as SOC constraints
\begin{align}
	h_{t,r}^\top \bar{z}_k &\leq 1 - \sqrt{\kappa_\beta} \sqrt{\frac{p_r^x}{1-p_r^x}} \Vert h_{t,r}^\top (\bar{B} \bar{M}_k + \bar{E}) \hat{\Sigma}_N^{1/2} \Vert_2 \label{eq:analytic_state_constraints}\\
	l_{t,s}^\top \bar{v}_k &\leq 1 - \sqrt{\kappa_\beta} \sqrt{\frac{p_s^u}{1-p_s^u}} \Vert l_{t,s}^\top \bar{M}_k  \hat{\Sigma}_N^{1/2} \Vert_2. \label{eq:analytic_input_constraints}
\end{align}
\subsection{Distributionally robust cost function}
\label{sec:cost_fcn}
Similar to the previous section we robustify the cost function \eqref{eq:smpc:cost} against distributional ambiguity. To this end, we reformulate \eqref{eq:smpc:cost} by leveraging the matrices introduced in Section \ref{sec:problem}, \ref{sec:chance_constraints} and define the distributionally robust cost function as
\begin{align}
	&J_k(k) \nonumber \\
	&= \sup_{\mu \in \mathcal{P}}\: \mathbb{E}_{\mu} \left\{ \Vert x_{N|k} \Vert_{P}^2 + \sum_{t=0}^{N-1} \Vert x_{t|k} \Vert_{Q}^2 + \Vert u_{t|k} \Vert_{R}^2  \bigg | x(k)\right\} \nonumber\\
	&= \sup_{\mu \in \mathcal{P}}\: \mathbb{E}_{\mu} \{ d^\top \left( \bar{H}_k^\top \bar{Q} \bar{H}_k + \bar{F}_k^\top \bar{R} \bar{F}_k  \right) d \: | \: x(k)  \} \nonumber \\
	&= \text{tr}\left(\sup_{\mu \in \mathcal{P}}\: \mathbb{E}_{\mu} \{ d d^\top| x(k) \} \big[ \bar{H}_k^\top \bar{Q} \bar{H}_k + \bar{F}_k^\top \bar{R} \bar{F}_k \big]  \right), \label{eq:dr_cost}
\end{align}
where $\bar{H}_k = [\bar{B} \bar{M}_k + \bar{E}, \bar{z}_k]$, $\bar{F}_k = [\bar{M}_k, \bar{v}_k]$, $\bar{Q} = \text{diag}(I_N \otimes Q, P)$, $\bar{R} = I_N \otimes R$. The third equality applies the trace trick for quadratic forms, which can be further simplified with
\begin{align*}
	\hat{\Sigma}^{d}_N \coloneqq \sup_{\mu \in \mathcal{P}}\mathbb{E}_{\mu}\{ dd^\top | x(k) \} =  \scriptsize\begin{bmatrix}
		\kappa_\beta \hat{\Sigma}_N  & 0 \\
		0 & 1
	\end{bmatrix}.
\end{align*}
For the cost $J_k(\cdot)$ we use the convention that the subscript denotes the time on which the expected value is conditioned on, whereas the argument is the closed-loop time instant at which the underlying MPC optimization problem is solved.
\subsection{Terminal constraints}%
We enforce stability of the controller by imposing constraints at the end of the prediction horizon, where we make the following assumption.
\begin{assum}
	\label{assum:terminal_set}
	There exists a terminal controller $\pi_f(z) = K z$ and a terminal set $\mathbb{Z}_f$, such that for all $z \in \mathbb{Z}_f$
	\begin{align*}
		&(A+BK)z \in \mathbb{Z}_f &\\
		& h^\top_r z \leq 1 - \sqrt{\frac{p_r^x}{1-p_r^x}} \left \Vert h_{r}^\top \hat{\Sigma}_\infty^{1/2} \right \Vert_2,\: &r = 1, \ldots, n_r \\
		& l^\top_s K z \leq 1 - \sqrt{\frac{p_r^x}{1-p_r^x}} \left \Vert l_s^\top K \hat{\Sigma}_\infty^{1/2} \right \Vert_2, \: &s = 1, \ldots, n_s 
	\end{align*}
	where $\hat{\Sigma}_\infty = (A+BK) \hat{\Sigma}_\infty (A+BK)^\top + \kappa_\beta E \hat{\Sigma} E^\top$ and $h_r \in \mathbb{R}^{n_x}$, $l_s \in \mathbb{R}^{n_u}$ denote the l.h.s. of the terminal half-space constraints.
\end{assum}
The first condition of Assumption \ref{assum:terminal_set} ensures that the terminal set is invariant for the nominal system under the terminal controller, whereas the second and third conditions enforce the distributionally robust chance constraints for all $z \in \mathbb{Z}_f$ under the worst-case steady-state covariance $\hat{\Sigma}_\infty$.
\begin{remark}
	Assumption \ref{assum:terminal_set} can be ensured with methods proposed \cite[Sec. 2.4.2]{conte2016distributed}, i.e. by using an ellipsoidal terminal set $\mathbb{Z}_f = \{ x \:|\: x^\top P x \leq \alpha \}$ as an $\alpha$-scaled sub level set of the terminal cost function $V_f(x) = \Vert x \Vert_P^2$. All that remains is to choose a scalar $\alpha$ such that the terminal state and input chance constraints (inequality constraints in Assumption \ref{assum:terminal_set}) are satisfied, which can be easily determined as the solution to a linear optimization problem.
\end{remark}

\subsection{Initial condition}
The final and most crucial point in ensuring recursive feasibility is the selection of a suitable initial condition for the MPC optimization problem. Various methods have been proposed in the literature, e.g., Coppens and Patrinos \cite{coppens2021data} require a boundedness assumption on the disturbance to always initialize $x_{0|k} = x(k)$.  
Farina et al. \cite{farina2013probabilistic} remove this boundedness assumption and propose a reset based initialization scheme, that is, whenever possible the feedback strategy (Mode 1) $x_{0|k} = x(k)$ is selected. However, this may lead to infeasibility in \eqref{eq:constraints} due to the unboundedness of $w(k)$. Thus, if Mode 1 is infeasible, the backup strategy $x_{0|k} = z_{1|k-1}$ (Mode 2) is applied. A similar approach was adopted by Hewing et al. \cite{hewing2018stochastic}. 
One downside of the reset-based initialization scheme is the necessity of solving two optimization problems whenever Mode 1 is infeasible. In this work we adopt a recently proposed initialization scheme from \cite{kohler2022recursively}, which interpolates linearly between Mode 1 and Mode 2 resulting in the constraint
\begin{align}
	z_{0|k} &= (1 - \lambda_k) x(k) + \lambda_k z_{1|k-1}, \label{eq:init:mean}
\end{align}
where $\lambda_k \in [0, 1]$. The advantage is that only one optimization problem needs to be solved, where $\lambda_k = 1$ reflects the guaranteed feasible solution (Mode 2) and $\lambda_k = 0$ the feedback strategy (Mode 1).
\subsection{Optimization problem}
At each time instant $k\geq 0$ we solve the following MPC optimization problem
\begin{subequations}
	\label{eq:mpc_optimization}
	\begin{align}
		\!\min_{\bar{v}_k,\bar{M}_k, \lambda_k, z_{0|k}} &  \quad \text{tr}\left(\hat{\Sigma}^{d}_N \big[ \bar{H}_k^\top \bar{Q} \bar{H}_k + \bar{F}_k^\top \bar{R} \bar{F}_k \big]  \right) \label{eq:dr_mpc:cost}  \\
		\text{s.t.} & \quad \bar{z}_k = \bar{A} z_{0|k} + \bar{B} \bar{v}_k  \\
		& \quad \eqref{eq:analytic_state_constraints}, \eqref{eq:analytic_input_constraints}, \eqref{eq:init:mean}, \lambda_k \in [0, 1]\\
		& \quad z_{N|k} \in \mathbb{Z}_f. \label{eq:dr_mpc:terminal_constraints}
	\end{align}
\end{subequations}
The solution to problem \eqref{eq:mpc_optimization} is the optimal SADF pair $(\bar{v}_k^*, \bar{M}_k^*)$ and the nominal states $\bar{z}_k^*$. To obtain the control input at time $k$, we recall \cite[Thm. 1]{zhang2020stochastic}, which establishes an equivalence between the SADF parameterization \eqref{eq:disturbance_feedback} and the state feedback parameterization. By linear superposition, we can thus establish also an equivalence to the error-feedback (EF) parameterization $u_{t|k} = g_{t|k} + \sum_{i=0}^{t} K_{t-i|k} (x_{i|k} - z_{i|k})$. In other words, the state and input trajectories $(\bar{x}_k, \bar{u}_k)$ resulting from SADF with $(\bar{v}_k^*, \bar{M}_k^*)$ are equivalent to the ones obtained from EF with $(\bar{g}_k^*, \bar{K}_k^*)$, where  %
\begin{align*}
	{\tiny
		\bar{K}_k \coloneqq \begin{bmatrix}
			K_{0|k} & 0 & \ldots & 0 & 0 \\
			K_{1|k} & K_{0|k} & \ldots & 0 & 0 \\
			\vdots & \ddots & \ddots & \vdots & 0 \\
			K_{N-1|k} & \ldots & K_{1|k} & K_{0|k} & 0 
		\end{bmatrix}, \bar{g}_k \coloneqq \begin{bmatrix}
			g_{0|k} \\
			g_{1|k} \\
			\vdots \\
			g_{N-1|k}
		\end{bmatrix}.
	}
\end{align*}
Similar to \cite{zhang2020stochastic}, the optimal pair $(\bar{g}_k^*, \bar{K}_k^*)$ is obtained by
\begin{subequations}
	\begin{align}
		\bar{K}_k^* &= (I + \bar{M}_k^* \bar{E}^\dagger \bar{B})^{-1} \bar{M}^*_k \bar{E}^\dagger \label{eq:K_trafo}\\
		\bar{g}_k^* &= (I + \bar{M}_k^* \bar{E}^\dagger \bar{B})^{-1} ( \bar{v}^*_k - \bar{M}_k^* \bar{E}^\dagger A z^*_{0|k}), \label{eq:g_trafo}
	\end{align}
\end{subequations}
while the input to system \eqref{eq:dynamics} is defined with the EF parameterization at time $t=0$, resulting in 
\begin{align}
	u(k) = u_{0|k} = g^*_{0|k} + K_{0|k}^* (x(k) - z_{0|k}^*). \label{eq:closed_loop:input}
\end{align}
\begin{remark}
	\label{rem:chance_constraint}
	Note that the chance constraints \eqref{eq:dr_chance_constraint} - \eqref{eq:dr_input_constraint} depend on the information available at time $k$. In view of the initial condition \eqref{eq:init:mean}, this implies that whenever the MPC problem \eqref{eq:mpc_optimization} is feasible with $\lambda_{k} = 0$, the probability operator in \eqref{eq:dr_chance_constraint} - \eqref{eq:dr_input_constraint} is conditioned on time $k$, resulting in closed-loop constraint satisfaction, while for $\lambda_{k} \in (0, 1]$ the constraints are verified in prediction, i.e. conditioned on the last time instant $k-\tau$ when problem \eqref{eq:mpc_optimization} was feasible with $\lambda_{k-\tau} = 0$. 
	Note that the same conditioning appears in the expectation operator of the cost function  \eqref{eq:dr_cost}.
	
	By leaving $\lambda_{k}$ un-penalized in the objective function \eqref{eq:dr_mpc:cost}, we mimic a so-called hybrid scheme \cite{farina2016stochastic} with the intention to minimize the open-loop cost despite feasibility of $x(k)$. 
	This can lead to an increase in constraint violations in presence of unmodeled disturbances, as we will demonstrate in Section \ref{sec:numerical_example:disturbances}. However, adding an additional penalty term $c \lambda_k^2$ with $c>0$ to the objective function \eqref{eq:dr_mpc:cost} causes the MPC controller to favor feedback initialization $z_{0|k} = x(k)$ with the intention of introducing as much feedback as possible into the constraints, i.e. conditioning the probability operator in \eqref{eq:dr_chance_constraint} - \eqref{eq:dr_input_constraint} on time $k$ as often as possible. A possible drawback is the degradation of transient closed-loop performance, since the initial state cannot be freely chosen and the optimization problem therefore has fewer degrees of freedom, see Section \ref{sec:numerical_example:disturbances} for a numerical comparison.
\end{remark}
\section{Theoretical properties}
\label{sec:theoretical}
\subsection{Recursive feasibility}
\begin{proposition}
	\label{prop:recursive}
	Let Assumption \ref{assum:terminal_set} hold. If at time $k=0$ the MPC optimization problem \eqref{eq:mpc_optimization} admits a feasible solution with $\lambda_{0} = 0$, then it is recursively feasible for all $k \geq 0$.
\end{proposition}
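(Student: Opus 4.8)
The plan is a one-step induction on the closed-loop time $k$, based on the classical ``shift the previous optimizer and append the terminal control law'' construction, adapted to the interpolated initial-state constraint \eqref{eq:init:mean} in the spirit of \cite{kohler2022recursively}. The base case is the hypothesis (feasibility at $k=0$ with $\lambda_0=0$). For the induction step I would assume that \eqref{eq:mpc_optimization} is feasible at time $k$ with a feasible point $(\bar v_k^\star,\bar M_k^\star,\lambda_k^\star,z_{0|k}^\star)$ and associated nominal trajectory $(z_{0|k}^\star,\dots,z_{N|k}^\star)$, and exhibit an explicit feasible point at time $k+1$; this closes the induction and yields feasibility of \eqref{eq:mpc_optimization} for all $k\ge 0$, regardless of the realized (unbounded) disturbance.

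For the candidate at time $k+1$ I would set $\lambda_{k+1}=1$, so that \eqref{eq:init:mean} forces $z_{0|k+1}=z_{1|k}^\star$ independently of the realized state $x(k+1)$ — this is exactly the step that makes the construction robust to unboundedness. I would then shift the nominal inputs, $v_{i|k+1}=v_{i+1|k}^\star$ for $i=0,\dots,N-2$, complete the sequence with the terminal control move $v_{N-1|k+1}=Kz_{N|k}^\star$, and shift the simplified disturbance-feedback blocks accordingly, completing the last prediction step with the terminal controller $K$. The nominal dynamics $\bar z_{k+1}=\bar A z_{0|k+1}+\bar B\bar v_{k+1}$ then yield $z_{t|k+1}=z_{t+1|k}^\star$ for $t=0,\dots,N-1$ and $z_{N|k+1}=(A+BK)z_{N|k}^\star$, so the terminal constraint \eqref{eq:dr_mpc:terminal_constraints} follows from the invariance property $z_{N|k}^\star\in\mathbb{Z}_f\Rightarrow(A+BK)z_{N|k}^\star\in\mathbb{Z}_f$ in Assumption \ref{assum:terminal_set}, while $\lambda_{k+1}=1\in[0,1]$ holds by construction. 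Since \eqref{eq:analytic_state_constraints}--\eqref{eq:analytic_input_constraints} depend on $(\bar v_k,\bar M_k,\bar z_k)$ only through the induced joint law of $(\bar x_k,\bar u_k)$, the shift may be carried out in either the simplified disturbance-feedback or the equivalent error-feedback coordinates of \cite[Thm.~1]{zhang2020stochastic}.

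It remains to verify the tightened state and input constraints \eqref{eq:analytic_state_constraints}--\eqref{eq:analytic_input_constraints} for this candidate. For the prediction stages $t=0,\dots,N-2$ these are the stage-$(t{+}1)$ constraints at time $k$ after the shift: the nominal terms coincide because $z_{t|k+1}=z_{t+1|k}^\star$, and the tightening terms are handled by the key structural observation that the simplified-disturbance-feedback prediction covariance at stage $t$ is a partial sum $\sum_{m=1}^{t}\Psi_m\hat\Sigma\Psi_m^\top$ of positive semidefinite matrices (with $\Psi_m$ the closed-loop pulse responses formed from $A$, $B$, $E$ and the feedback blocks), hence non-decreasing in $t$; therefore the shifted stage-$t$ tightening is no larger than the stage-$(t{+}1)$ tightening already satisfied at time $k$. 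The newly appended constraint at stage $t=N-1$ has nominal value $z_{N|k}^\star\in\mathbb{Z}_f$ and is to be covered by the terminal chance-constraint conditions of Assumption \ref{assum:terminal_set}, which are stated with the worst-case steady-state covariance $\hat\Sigma_\infty$; the input constraint at that stage is treated analogously through the terminal move $Kz_{N|k}^\star$.

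I expect the main obstacle to be exactly this last step: making the shift compatible with the Toeplitz structure of the simplified disturbance feedback while ensuring that the prediction-error covariance entering the appended stage-$(N-1)$ constraint is dominated by $\hat\Sigma_\infty$, so that $z_{N|k}^\star\in\mathbb{Z}_f$ indeed provides sufficient margin. This is where the terminal ingredients — the Lyapunov inequality \eqref{eq:lyapunov}, the terminal gain $K$, and the fixed-point covariance $\hat\Sigma_\infty$ — have to be used carefully; the nominal-trajectory bookkeeping, the terminal-set invariance, and the $\lambda_{k+1}=1$ device are routine.
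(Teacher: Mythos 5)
Your proposal follows essentially the same route as the paper: shift the previous optimizer, append the terminal controller, set $\lambda_{k+1}=1$ so that \eqref{eq:init:mean} gives $z_{0|k+1}=z_{1|k}^*$ independently of the realized state, and invoke the invariance and tightened-constraint conditions of Assumption \ref{assum:terminal_set}. The one place where you stop short is exactly where the paper does something concrete: the ``Toeplitz-compatibility'' obstacle you flag at the end is resolved in the paper not by shifting the SADF blocks directly, but by building the candidate in the equivalent error-feedback coordinates --- shifted pairs $(\tilde z_{t|k+1},\tilde K_{t|k+1})=(z^*_{t+1|k},K^*_{t+1|k})$ appended with $((A+BK)z^*_{N|k},K)$ and the feedforward $\tilde u_{N-1|k+1}=Kz^*_{N|k}$ --- and then mapping this candidate back to an admissible SADF pair $(\bar v_{k+1},\bar M_{k+1})$ through the explicit bijection of \cite[Thm.~1, eq.~(24)]{zhang2020stochastic}, namely $\bar M_{k+1}=\tilde{\bar K}_{k+1}(I-\bar B\tilde{\bar K}_{k+1})^{-1}\bar E$ and the corresponding formula for $\bar v_{k+1}$; this matters because your direct shift of the $M$-blocks leaves the reused disturbance-feedback gains (rather than the terminal gain $K$) acting at the appended stage, which is precisely why you cannot easily match the appended constraints against Assumption \ref{assum:terminal_set}. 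Your covariance-monotonicity observation for the shifted stages $t=0,\dots,N-2$ (the stage-$t$ tightening at $k+1$ drops one positive-semidefinite pulse-response term relative to the stage-$(t+1)$ tightening at $k$, while the nominal values coincide) is correct and is in fact more explicit than the paper, which compresses this and the appended-stage verification into the single sentence that feasibility ``is a consequence of Assumption \ref{assum:terminal_set}''; in particular, your residual worry about dominating the appended-stage prediction covariance by $\hat\Sigma_\infty$ is not given any more detail in the paper's proof than in your attempt. Note also that the paper's argument starts from a previous solution with $\lambda_k^*=0$ (consistent with the conditioning convention of Remark \ref{rem:chance_constraint}), whereas your induction hypothesis allows arbitrary $\lambda_k$; since the constraints of \eqref{eq:mpc_optimization} are deterministic SOC constraints in the decision variables, your version of the induction is structurally fine, but you should make the conditioning bookkeeping explicit if you want the chance-constraint interpretation to match the paper's.
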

\begin{proof}
	Suppose that at time $k$ problem \eqref{eq:mpc_optimization} is feasible with $\lambda^*_k = 0$, $(\bar{v}_k^*, \bar{M}_k^*)$, $\bar{z}_k^*$ and equivalently with the EF parameterized input
	$\bar{u}_k = \bar{g}^*_k + \bar{K}^*_k (\bar{x}_k - \bar{z}_k^*)$ with $(\bar{g}_k^*, \bar{K}_k^*)$ due to \cite[Thm. 1]{zhang2020stochastic}. Now we construct the usual shifted candidate sequence $\tilde{u}_{t|k+1} = u_{t+1|k}$ for $t = 0, \ldots, N-2$ and append the terminal controller $\tilde{u}_{N-1|k+1} = K z^*_{N|k}$. The shifted mean states and controller gains satisfy {\small$(\tilde{z}_{t|k+1}, \tilde{K}_{t|k+1}) = (z^*_{t+1|k}, K^*_{t+1|k})$ for $t = 0, \ldots, N-1$} appended with {\small$(\tilde{z}_{N|k}, \tilde{K}_{N|k}) = ((A+BK) z^*_{N|k}, K)$}. Recursive feasibility is then a consequence of Assumption \ref{assum:terminal_set}. By stacking the shifted candidate sequences into the corresponding matrix and vector form, we obtain the triplet $(\tilde{\bar{g}}_{k+1}, \tilde{\bar{K}}_{k+1}, \tilde{\bar{z}}_{k+1})$. A feasible input pair $(\bar{v}_{k+1}, \bar{M}_{k+1})$ for problem \eqref{eq:mpc_optimization} is then simply found by \cite[eq. (24)]{zhang2020stochastic}, i.e.
	{\small
		\begin{align*}
			\bar{M}_{k+1} &= \tilde{\bar{K}}_{k+1} (I - \bar{B} \tilde{\bar{K}}_{k+1})^{-1} \bar{E} \\
			\bar{v}_{k+1} &= \tilde{\bar{K}}_{k+1} (I - \bar{B} \tilde{\bar{K}}_{k+1})^{-1} (\bar{A} \tilde{z}^*_{0|k} + \bar{B} \tilde{\bar{g}}_{k+1} ) + \tilde{\bar{g}}_{k+1}
		\end{align*}
	} \normalsize
	with $\lambda_{k+1} = 1$. This concludes the proof.
\end{proof}
\subsection{Convergence}
The following theorem establishes a quadratic stability result of the closed-loop system \eqref{eq:dynamics} under control law \eqref{eq:closed_loop:input}. By adding an additional penalty term $c \lambda_k^2$ to the cost function \eqref{eq:dr_mpc:cost} (cf. Remark \ref{rem:chance_constraint}), the performance bound \eqref{eq:average_bound} changes to $\kappa_\beta \text{tr}( P E\hat{\Sigma}E^\top) + c$. 
\begin{theorem}
	\label{lem:cost_decrease}
	Let Assumption 1 hold and choose $\beta \in (0,1)$, $\epsilon \in (0, 0.5)$ and $N_s$, such that \eqref{eq:thm:condition} holds true and let $\hat{\Sigma}$ be the corresponding empirical covariance matrix. Suppose that at time $k=0$ there exists a feasible solution to problem \eqref{eq:mpc_optimization}. Then, for all $k \geq 0$ the optimal cost $J^*_{k}(k+1)$ satisfies
	\begin{multline*}
		J^*_{k}(k+1) - J^*_k(k) \\ 
		\leq - \mathbb{E}\{\Vert x(k) \Vert_Q^2 + \Vert u(k) \Vert_R^2 | x(k)\} + \kappa_\beta \text{tr}( PE \hat{\Sigma}E^\top).
	\end{multline*}
	Furthermore, with a probability of at least $1-\beta$ the closed-loop system achieves the following asymptotic average bound
	{\small
		\begin{multline}
			\label{eq:average_bound}
			\mathbb{P} \bigg( \lim_{T \rightarrow\infty} \frac{1}{T} \sum_{k = 0}^{T-1} \mathbb{E}_{\mu^*} \{ \Vert x(k) \Vert_Q^2 + \Vert u(k) \Vert_R^2  | x(0) \} \\
			\leq \kappa_\beta \text{tr}( P E\hat{\Sigma}E^\top) \bigg) \geq 1-\beta.
	\end{multline} }
\end{theorem}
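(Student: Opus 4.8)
\emph{Proof plan.} The plan is to run the standard dissipativity argument for stochastic MPC — a one-step decrease of the optimal value along the closed loop, followed by a telescoping sum — adapted to the distributionally robust cost and the interpolated initial condition. The two non-textbook ingredients are: (i) the data-driven surrogate $\kappa_\beta\hat\Sigma$ is used in place of the unknown true second moment $\Sigma=\mathbb{E}_{\mu^*}\{ww^\top\}$, which is legitimate exactly on the event $\{\mu^*\in\mathcal P\}$, of probability at least $1-\beta$ by Proposition~\ref{prop:covariance} under \eqref{eq:thm:condition}; and (ii) the linearly interpolated initial condition \eqref{eq:init:mean} forces one to track which time instant the cost is conditioned on (Remark~\ref{rem:chance_constraint}).

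For the one-step decrease, consider a time $k$ at which \eqref{eq:mpc_optimization} is feasible with $\lambda_k^*=0$, so that $z_{0|k}^*=x(k)$ and, by \eqref{eq:closed_loop:input}, $u(k)=g_{0|k}^*=v_{0|k}^*$; the case $\lambda_k^*\in(0,1]$ is subsumed by the conditioning convention of Remark~\ref{rem:chance_constraint}. Take the shifted candidate built in the proof of Proposition~\ref{prop:recursive} — $\tilde z_{t|k+1}=z_{t+1|k}^*$, the shifted feedback gains, the terminal controller $K$ appended at the final stage, and $\lambda_{k+1}=1$ — which is feasible for \eqref{eq:mpc_optimization} at time $k+1$, so that $J^*_k(k+1)$ is upper bounded by the value \eqref{eq:dr_cost} of this candidate. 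Evaluating that value: the nominal part telescopes and discards the first stage, leaving $\sum_{t=1}^{N-1}(\|z_{t|k}^*\|_Q^2+\|v_{t|k}^*\|_R^2)$ plus the terminal block $\|z_{N|k}^*\|_Q^2+\|z_{N|k}^*\|_{K^\top R K}^2+\|z_{N|k}^*\|_{(A+BK)^\top P(A+BK)}^2$, which is $\le\|z_{N|k}^*\|_P^2$ by the Lyapunov inequality \eqref{eq:lyapunov}; the disturbance-feedback part is handled the same way after reindexing the predicted disturbances so the shifted gains act on the shifted noise sequence, the single surviving new contribution being $\kappa_\beta\,\text{tr}(PE\hat\Sigma E^\top)$, which accounts for the disturbance entering the appended terminal stage and, via \eqref{eq:lyapunov} and $\sup_{\mu\in\mathcal P}\mathbb{E}_\mu\{ww^\top\}=\kappa_\beta\hat\Sigma$, also dominates the mismatch $x(k+1)-z_{1|k}^*=Ew(k)$ that the candidate inherits from the realized closed loop (here $\mathbb{E}_{\mu^*}\{ww^\top\}\le\kappa_\beta\hat\Sigma$, i.e.\ $\mu^*\in\mathcal P$, is used). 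Collecting terms gives the one-step inequality, with $-(\|z_{0|k}^*\|_Q^2+\|v_{0|k}^*\|_R^2)=-\mathbb{E}\{\|x(k)\|_Q^2+\|u(k)\|_R^2\mid x(k)\}$ by $\lambda_k^*=0$.

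For the asymptotic bound, take $\mathbb{E}_{\mu^*}\{\,\cdot\mid x(0)\}$ of the one-step inequality and chain successive stages by the tower property, $\mathbb{E}_{\mu^*}\{J^*_{k+1}(k+1)\mid\mathcal F_k\}=J^*_k(k+1)$; summing over $k=0,\dots,T-1$ telescopes the value terms, and since \eqref{eq:dr_cost} is the trace of a product of positive semidefinite matrices, hence nonnegative, one drops $J^*_T(T)\ge0$ to obtain
\[ \frac1T\sum_{k=0}^{T-1}\mathbb{E}_{\mu^*}\{\|x(k)\|_Q^2+\|u(k)\|_R^2\mid x(0)\}\le\frac{J^*_0(0)}{T}+\kappa_\beta\,\text{tr}(PE\hat\Sigma E^\top). \]
Letting $T\to\infty$ yields \eqref{eq:average_bound}; the confidence $1-\beta$ is inherited from the event $\{\mu^*\in\mathcal P\}$ invoked in the one-step step (if one only proves the $\limsup$ version, it coincides with the $\lim$ whenever the closed loop admits a stationary distribution). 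Finally, adding the penalty $c\lambda_k^2$ to \eqref{eq:dr_mpc:cost} adds exactly $c$ to the candidate's cost because $\lambda_{k+1}=1$, which propagates through the telescoping and changes the bound to $\kappa_\beta\,\text{tr}(PE\hat\Sigma E^\top)+c$.

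The main obstacle I anticipate is the bookkeeping around the interpolated initial state: one must verify that $J^*_k(k+1)$ really is the cost conditioned on time $k$ (as it is for the $\lambda_{k+1}=1$ candidate), that the realized mismatch $Ew(k)$ is charged exactly once so that only a single copy of $\kappa_\beta\,\text{tr}(PE\hat\Sigma E^\top)$ survives, and that the telescoping still closes when $\lambda_k^*>0$ occurs for some $k$. The disturbance-feedback part of the cost comparison is routine but tedious, because the candidate carries the shifted optimal gains rather than one fixed closed-loop gain, so the disturbance reindexing must be done explicitly before \eqref{eq:lyapunov} can be applied stage by stage.
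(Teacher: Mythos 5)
Your proposal follows essentially the same route as the paper's proof: upper-bound $J^*_k(k+1)$ by the cost of the shifted candidate from Proposition~\ref{prop:recursive} with $\lambda_{k+1}=1$, cancel the common stages of the nominal and covariance parts, absorb the appended terminal block via the Lyapunov inequality \eqref{eq:lyapunov}, then telescope, drop the nonnegative value term, and attach the $1-\beta$ confidence through $\mathbb{P}(\Sigma\leq\kappa_\beta\hat\Sigma)\geq 1-\beta$. One bookkeeping clarification, which settles the obstacle you flag: since the $\lambda_{k+1}=1$ candidate is conditioned on time $k$, its cost does not depend on the realized $x(k+1)$ at all, so the mismatch $Ew(k)$ is never "charged" — its worst-case covariance is exactly $\hat\Sigma^x_{1|k}$, already contained in $J^*_k(k)$, and cancels in the difference, the single extra term $\kappa_\beta\,\text{tr}(PE\hat\Sigma E^\top)$ coming solely from the appended terminal stage of the covariance recursion; consequently the event $\mu^*\in\mathcal{P}$ is not needed for the one-step decrease (which is stated for the distributionally robust expectation) but only when replacing it by $\mathbb{E}_{\mu^*}$ in the average bound \eqref{eq:average_bound}.
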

\begin{proof}
	We establish an expected cost decrease condition in case of $\lambda_{k+1} = 1$. First, consider the cost function \eqref{eq:dr_cost}, which, due its quadratic form and in view of Proposition \ref{prop:recursive} can equivalently be written as $J_k(k) = J^m(\bar{z}_{k}, \bar{g}_{k}) + J^v(\kappa_\beta \hat{\Sigma}, \bar{K}_{k})$, where the mean and variance part satisfy
	{\small
		\begin{align*}
			J^m(\bar{z}_{k}, \bar{g}_{k}) &= \Vert z_{N|k} \Vert_{P}^2 + \sum_{t=0}^{N-1} \Vert z_{t|k} \Vert_{Q}^2 + \Vert g_{t|k} \Vert_{R}^2 \\
			J^v(\kappa_\beta \hat{\Sigma}, \bar{K}_{k}) &= \text{tr}(P \hat{\Sigma}^x_{N|k}) + \sum_{t=0}^{N-1} \text{tr}(Q \hat{\Sigma}^x_{t|k} + R \hat{\Sigma}^u_{t|k})
		\end{align*}
	}
	with $\hat{\Sigma}^x_{t+1|k} = (A+BK_{t|k}) \hat{\Sigma}^x_{t|k} (A+BK_{t|k})^\top + \kappa_\beta E \hat{\Sigma} E^\top$ and $\hat{\Sigma}^u_{t|k} = \sum_{i = 0}^{t} K_{t-i|k} \hat{\Sigma}^x_{i|k} K^\top_{t-i|k}$. Using the feasible solution from Proposition \ref{prop:recursive} we can argue by optimality that
	{\small
		\begin{align*} 
			&J^*_k(k+1) \overset{\lambda_{k+1} = 1}{\leq} J^m(\bar{z}_{k+1}, \bar{g}_{k+1}) + J^v(\kappa_\beta \hat{\Sigma}, \bar{K}_{k+1}) \\
			&= J^m(\bar{z}_{k}, \bar{g}_{k}) - \Vert z_{0|k} \Vert_Q^2 - \Vert g_{0|k} \Vert_R^2 \\
			&\quad + \Vert z_{N|k} \Vert_Q^2 + \Vert K z_{N|k} \Vert_R^2 - \Vert z_{N|k} \Vert_P^2 + \Vert A_K z_{N|k} \Vert_P^2 \\
			&+ J^v(\kappa_\beta \hat{\Sigma}, \bar{K}_{k}) - \text{tr}( [Q+K_{0|k}^\top R K_{0|k}] \hat{\Sigma}^x_{0|k} ) \\
			&\quad + \text{tr}( [Q+K^\top R K] \hat{\Sigma}^x_{N|k} + P A_K \hat{\Sigma}^x_{N|k} A_K^\top \\
			&\quad + P E \kappa_\beta \hat{\Sigma} E^\top - P \hat{\Sigma}^x_{N|k}) \\
			&\overset{\eqref{eq:lyapunov}}{\leq} J^m(\bar{z}_{k}, \bar{g}_{k}) + J^v(\kappa_\beta \hat{\Sigma}, \bar{K}_{k})  - \Vert z_{0|k} \Vert_Q^2 - \Vert g_{0|k} \Vert_R^2 \\
			& - \text{tr}( [Q+K_{0|k}^\top R K_{0|k}] \hat{\Sigma}^x_{0|k}) + \kappa_\beta \text{tr}(P E \hat{\Sigma} E^\top)  \\
			&= J_k^*(k) - \mathbb{E}\{ \Vert x(k) \Vert_Q^2 + \Vert u(k) \Vert_R^2 | x(k) \} + \kappa_\beta\text{tr}(PE\hat{\Sigma} E^\top),
		\end{align*}
	}
	where $A_K = A + BK$. To achieve the asymptotic cost bound we follow standard arguments in stochastic MPC \cite{hewing2018stochastic}, i.e.
	{\small 
		\begin{align*}
			0 &\leq \lim_{T\rightarrow\infty} \frac{1}{T} \big(J^*_{0}(k) - J^*_0(0) \big) \\
			& \leq  \lim_{T\rightarrow\infty} \frac{1}{T} \sum_{k = 0}^{T-1} -\mathbb{E}_{\mu^*} \{ \Vert x(k) \Vert_Q^2 + \Vert u(k) \Vert_R^2 | x(0)\}\\
			&\quad +  \kappa_\beta \text{tr}( P E \hat{\Sigma} E^\top),
		\end{align*}
	}\normalsize whereas the probability bound \eqref{eq:average_bound} follows by definition of the ambiguity set \eqref{eq:ambiguity_set}, i.e. $\mathbb{P}( \Sigma \leq \kappa_\beta \hat{\Sigma}) \geq 1-\beta$.
\end{proof}

\section{Numerical example}
\label{sec:example}
In this section, we carry out a numerical example. We consider the following system
\begin{align*}
	x(k+1) =\begin{bmatrix}
		1 & 1 \\ 0 & 1
	\end{bmatrix} x(k) + \begin{bmatrix}
		1 \\0.5
	\end{bmatrix} u + \begin{bmatrix}
		1 & 0 \\ 0 & 1
	\end{bmatrix} w(k),
\end{align*}
where $w(k) \sim \mathcal{N}(0, \Sigma)$ with $\Sigma = 0.01^2 I$. For the ambiguity radius we select $\beta = 0.05$, $\epsilon = 0.0428$. Since $w(k)$ is a zero-mean Gaussian it follows that $\xi = \Sigma^{1/2} w$ is sub-Gaussian with variance $\sigma^2 = 1$. According to Proposition \ref{prop:covariance} we require $N_s \geq 516$ samples to give the guarantee that $\mathbb{P}(\mu^* \in \mathcal{P}) \geq 1-\beta$. 
For the MPC cost function we select the weighting matrices $\tiny Q = \begin{bmatrix}
	10 & 0 \\ 
	0 & 10
\end{bmatrix}$, $\tiny R = 1$ and $\tiny P = \begin{bmatrix}
	20.5988  &  5.9161 \\
	5.9161 &  14.2284
\end{bmatrix}$. We impose a single chance constraint $\mathbb{P}( x_2(k) \leq 1) \geq p_x$ and use an ellipsoidal terminal set $\mathbb{Z}_f = \{z \:|\: z^\top P z \leq \alpha \}$, where $\alpha = 0.5293$ is obtained from $N_s = 517$ samples. We keep $\alpha$ constant for each experiment and select a prediction horizon of $N = 10$. Note that the choice of $\alpha$ is quite conservative, i.e. for $N_s = 10^3$ the resulting terminal set is already $20.4$ times larger, whereas under exact moment information we could enlarge the terminal set about $21.9$ times.
\subsubsection{Performance and constraint satisfaction}
Starting at an initial condition $x(0) = [6, 0]^\top$ we performed $10^3$ Monte-Carlo runs for different sample sizes $550 \leq N_s \leq 10^6$ of the closed-loop system. As we can see in Figure \ref{fig:cost_decrease} (left) the expected cost converges asymptotically to the optimal cost derived with exact moment information as the sample size $N_s$ increases. As for the chance constraints, it can be seen in Figure \ref{fig:cost_decrease} (right) that as the number of samples increases, the controller becomes more confident to operate closer to the constraint. In Table \ref{table} we compare for different prescribed probability levels $p_x$ and sample sizes $N_s$ the achieved empirical constraint satisfaction rate averaged over $10^4$ Monte-Carlo runs. The discrepancy between the prescribed and empirical satisfaction rate can be reduced by introducing more knowledge of the underlying probability distribution, e.g. as in stochastic MPC. Finally, the MPC optimization problem \eqref{eq:mpc_optimization} is reliably solved in an average of $6$ milliseconds on a desktop PC with an Intel Core i7-9700k processor, Yalmip \cite{lofberg2004yalmip} and MOSEK \cite{aps2019mosek}.
\begin{center}
	\captionof{table}{Effect of sample size $N_s$ on the worst-case empirical probability of satisfying the constraint $\mathbb{P}(x_2 \leq 1) \geq p_x$.}
	\begin{tabular}{l|llll}
		$p_x$  & $N_s = 520$ & $N_s = 800 $ & $N_s = 10^5$  & $N_s = 10^6$\\ \hline
		$0.7$  & $100 \% $   & $99.25 \%$   & $86.95 \%$    & $ 85.99 \%$ \\
		$0.8$  & $100 \% $   & $99.95 \%$    & $93.81 \%$   & $ 93.29 \%$ \\
		$0.9$  & $100 \% $   & $100 \%$     & $99.17 \%$    & $ 98.83 \%$ \\
	\end{tabular} 
	\label{table}
\end{center}
\begin{figure}[t]
	\centering
	\includegraphics[width=1\linewidth]{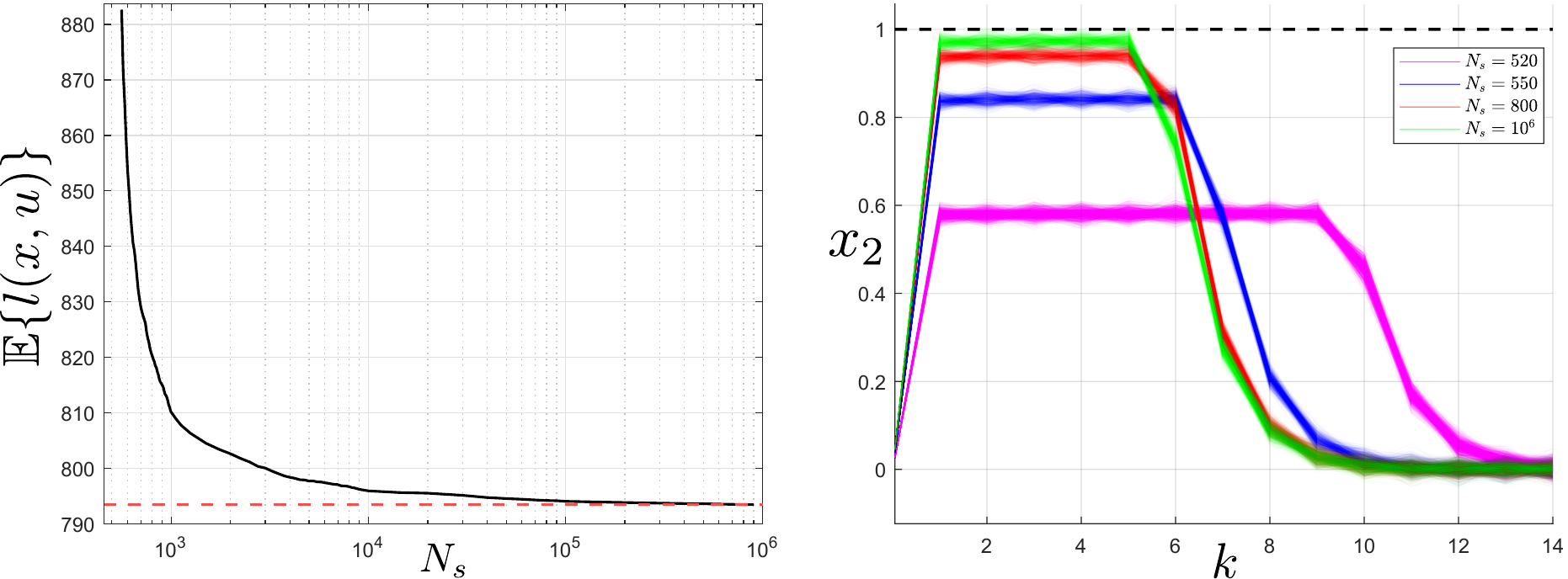}
	\caption{(\textbf{Left}) Expected closed-loop cost $l(x,u) = \sum_{k=1}^{15} \Vert x(k) \Vert_Q^2 + \Vert u(k) \Vert_R^2$ for different sample sizes $N_s$ computed over $10^3$ Monte-Carlo simulations (black) and optimal cost under exact moment information (red). (\textbf{Right}) Closed-loop trajectories for different $N_s$ with $p_x = 0.9$. The black dotted line denotes the constraint $x_2 \leq 1$.}
	 
	\label{fig:cost_decrease}%
\end{figure}
\subsubsection{Unmodeled disturbances}
\label{sec:numerical_example:disturbances}
In the following we investigate the benefits of adding a penalty term for $\lambda_k$ to the objective function \eqref{eq:dr_mpc:cost}. To this end we keep the same simulation setup as before and introduce an unmodeled larger disturbance at time step $k = 5$ with $w(5) \sim \mathcal{N}(0,6 \Sigma)$. We add $c \lambda_k^2$ to the objective function \eqref{eq:dr_mpc:cost} to force the MPC optimization problem to prefer the feedback initialization over open-loop cost reduction (Remark \ref{rem:chance_constraint}) and opt to satisfy the chance constraint with $70 \%$ probability.
\begin{center}
	\captionof{table}{Comparison of different controller configurations.}
	\begin{tabular}{l|llll}
		Method  				    & $c = 0$ 		& $c = 10$ 	   & $c = 10^3$ & $c = 10^6$  \\ \hline
		$\mathbb{E}\{l(x,u)\}$  	& $783.20 $     & $784.47$     & $784.50$ 	& $784.74$    \\
		$\mathbb{P}(x_2(5) \leq 1)$ & $68.76 \% $   & $73.37 \%$   & $73.91 \%$ & $ 75.08\%$  \\
	\end{tabular} 
	\label{table2}
\end{center}
Table \ref{table2} reveals that penalization of $\lambda_k$ increases the constraint satisfaction rate by sacrificing transient closed-loop performance compared to the unpenalized case $c = 0$. Additionally, for $c > 0$ the chance constraint is empirically verified, whereas $c = 0$ violates the prescribed level of $70 \%$.
\section{Conclusion}
We have presented a DR-MPC framework for linear systems with additive disturbances under moment-based ambiguity sets, providing guarantees on closed-loop performance and recursive feasibility. The chance constraints are replaced with distributionally robust chance constraints in form of SOC constraints, whereas the cost function is minimized subject to the worst-case distribution across the ambiguity set. We used a simple numerical example to highlight the properties of the resulting controller.

\end{document}